\documentclass[11pt]{article}

\usepackage{amsmath,amsthm,verbatim,amssymb,amsfonts,amscd, graphicx}
\usepackage{graphics}
\usepackage{authblk}
\usepackage{upgreek}
\usepackage{amsrefs}
\topmargin0.0cm
\headheight0.0cm
\headsep0.0cm

\oddsidemargin0.0cm
\textheight23.0cm
\textwidth16.5cm
\footskip1.0cm
\theoremstyle{plain}
\newtheorem{theorem}{Theorem}[section]

\newtheorem{proposition}[theorem]{Proposition}
\theoremstyle{remark}
\newtheorem{remark}[theorem]{Remark}
\numberwithin{equation}{section}
\newcommand{\diff}{\mathop{}\!\mathrm{d}}

\title{A survey of geometric constraints on the blowup of solutions of the Navier--Stokes equation}
\author[1]{Evan Miller}
\affil[1]{University of British Columbia, Department of Mathematics

emiller@msri.org}

\begin{document}

\maketitle

\begin{abstract}
In this survey article, we will discuss some regularity criteria for the Navier--Stokes equation that provide geometric constraints on any possible finite-time blowup. We will also discuss the physical significance of such regularity criteria.
\end{abstract}

\section{Introduction}

The Navier--Stokes equation is an evolution equation that plays a central role in fluid mechanics. While it is among the most studied partial differential equations in mathematical physics, much about its solutions---including regularity, uniqueness, and stability---remains unknown. In this survey article, we will discuss some geometric constraints on the blowup of solutions of the Navier--Stokes equation in three dimensions. 

The incompressible Navier--Stokes equation is given by
\begin{align}
    \partial_t u-\nu\Delta u+(u\cdot \nabla)u
    +\nabla p&=0 \\
    \nabla \cdot u&=0,
\end{align}
where $u\in\mathbb{R}^d$ is the velocity, $p$ is the pressure, and $\nu>0$ is the kinematic viscosity. 
The first equation expresses Newton's second law, $F=ma$, where $\partial_t u+(u\cdot\nabla)u$ gives the acceleration of a fluid particle at a given point, $-\nabla p$ describes the force due to the pressure, and $\nu\Delta u$ describes the viscous forces due to the internal friction of the fluid. 
The Euler equation for an inviscid fluid with no internal friction is obtained when $\nu=0$.
We will note that the pressure $p$ is uniquely determined by the velocity $u$, and can be eliminated using the Helmholtz decomposition, yielding
\begin{equation}
    \partial_t u-\nu\Delta u
    +P_{df}\left((u\cdot \nabla)u\right)=0.
\end{equation}
For this reason the two main types of solutions of the Navier--Stokes equation, Leray weak solutions and mild solutions, are both defined without making any reference to the pressure.

Two other crucially important objects for the study of the Navier--Stokes equation are the strain and the vorticity. The strain is the symmetric part of $\nabla u$,
\begin{equation}
    S_{ij}=\frac{1}{2}(\partial_i u_j+\partial_j u_i),
\end{equation}
while the vorticity is a vector representation of the anti-symmetric part of $\nabla u$, 
given by
\begin{equation}
    \omega=\nabla \times u.
\end{equation}

The first notion of solution for the Navier--Stokes equation was developed by Leray in his seminal work \cite{Leray}.
Leray proved the global-in-time existence of weak solutions of the Navier--Stokes equation in the sense of integrating against smooth test functions and satisfying the energy inequality for all $0<t<+\infty,$
\begin{equation} \label{EnergyIneq}
    \frac{1}{2}\|u(\cdot,t)\|_{L^2}^2
    +\nu\int_0^t\|\nabla u(\cdot,\tau)\|_{L^2}^2 \diff\tau
    \leq 
    \frac{1}{2}\left\|u^0\right\|_{L^2}^2,
\end{equation}
for all initial data $u^0\in L^2_{df}$.
We will note that smooth solutions of the Navier--Stokes equation satisfy \eqref{EnergyIneq} with equality; 
the inequality for Leray weak solutions comes from passing to weak limits for a mollified problem.
While Leray weak solutions must exist globally-in-time, they are not known to be either smooth or unique.

One way around this is the notion of mild solutions developed by Fujita and Kato \cite{KatoFujita}, which are solutions satisfying the equation
\begin{equation}
    \partial_t u -\nu\Delta u
    =-P_{df}((u\cdot\nabla)u),
\end{equation}
in the sense of convolution with the heat kernel as in Duhamel's formula.
Fujita and Kato proved the local-in-time existence of mild solutions, and furthermore that such solutions must be smooth and unique. However, it remains one of the largest open problems in the field of nonlinear PDE, indeed one of the millennium problems set by the Clay Math Institute, whether mild solutions of the Navier--Stokes equation can blowup in finite-time in three spatial dimensions (either $\mathbb{R}^3$ or $\mathbb{T}^3$). For a fuller description of this problem, see \cite{ClayNS}.

The essential problem is that the bounds from the energy equality in $L^\infty_t L^2_x$ and $L^2_t\dot{H}^1_x$ are both supercritical with respect to scaling, as the Navier--Stokes equation is invariant under the rescaling
\begin{equation}
    u^\lambda(x,t)=
    \lambda u(\lambda x,\lambda^2t),
\end{equation}
for all $\lambda>0$.
The global-in-time existence of mild solutions can be guaranteed for small initial data in certain scale critical spaces. 
Kato proved the global existence of mild solutions for small initial data in $L^3$ \cite{KatoL3}, and this was later extended to the larger scale critial space $BMO^{-1}$ by Koch and Tataru \cite{KochTataru}.

It is also known that there must be a smooth solution of the Navier--Stokes equation if there is control on the history of some scale critical norm.
Ladyzhenskaya \cite{Ladyzhenskaya}, Prodi \cite{Prodi}, and Serrin \cite{Serrin} showed that if a solution of the Navier--Stokes equation blows up in finite-time $T_{max}<+\infty$,
then for all $\frac{2}{p}+\frac{3}{q}=1, 3<q\leq+\infty$,
\begin{equation}
    \int_0^{T_{max}}\|u(\cdot,t)\|_{L^q}^p
    \diff t=+\infty.
\end{equation}
Escauriaza, Seregin, and \v{S}ver\'ak extended this result to the endpoint case $p=+\infty, q=3$ \cite{ESS}, showing that if $T_{max}<+\infty$,
\begin{equation}
    \limsup_{t\to T_{max}}\|u(\cdot,t)\|_{L^3}
    =+\infty.
\end{equation}
The limsup was later replaced by a limit by Seregin \cite{SereginLim},
and this result has also been extended to nonendpoint, scale critical Besov spaces \cites{GKP,Albritton}.

\section{Component reduction regularity criteria}

The Ladyzhenskaya-Prodi-Serrin regularity criterion has also been extended to involve regularity criteria only requiring control on certain components of $u, \nabla u, \omega,$ or $S$ in some scale-critical space.
Chae and Choe proved the first scale-critical component reduction regularity criterion \cite{ChaeChoe}, proving that if $T_{max}<+\infty$, then
\begin{equation}
    \int_0^{T_{max}} \|e_3\times 
    \omega(\cdot,t)\|_{L^q}^p \diff t
    =+\infty,
\end{equation}
for all $\frac{2}{p}+\frac{3}{q}=2, 
\frac{3}{2}<q<+\infty$.
Note that this is a regularity criterion on two vorticity components because
\begin{equation}
    e_3\times\omega=(-\omega_2,\omega_1,0).
\end{equation}
The endpoint case $q=\frac{3}{2}$ remains an interesting open question: does $T_{max}<+\infty$ imply that
\begin{equation}
    \limsup_{t\to T_{max}}
    \|e_3\times \omega\|_{L^\frac{3}{2}}
    =+\infty.
\end{equation}
This regularity criterion has been extended to non-endpoint Besov spaces by Chen and Zhang \cite{ChenZhangBesov} and to endpoint Besov spaces by Guo, Ku\v{c}era and Skal\'ak \cite{GuoBesov}.
There is also related work by the author requiring global regularity for solutions of the Navier--Stokes equation when the initial data satisfies a condition requiring $e_3\times \omega^0$ to be sufficiently small \cite{MillerAlmost2D}.

Kukavica and Ziane proved a scale-critical component-reduction regularity criterion involving the derivative in just one direction \cite{KukavicaZiane}, 
proving that if a solution of the Navier--Stokes equation blows up in finite-time $T_{max}<+\infty$, then for all $\frac{2}{p}+\frac{3}{q}=2, 
\frac{9}{4}\leq q \leq 3$,
\begin{equation}
    \int_0^{T_{max}}\|\partial_3 u(\cdot,t)\|_{L^q}^p
    \diff t=+\infty.
\end{equation}
Kukavica, Rusin, and Ziane recently extended this to a localized regularity criterion \cites{KukavicaLocal}.

There are also a number of papers which extend the range of exponents for which this result holds.
Cao extended \cite{Cao} this result to the range of exponents $\frac{27}{16}\leq q\leq 3$, although the proof in this paper only covers the range $\frac{27}{16}\leq q\leq \frac{5}{2}$, with the rest of the range already proven by Kukavica and Ziane in \cite{KukavicaZiane}.
Zhang extended \cite{ZujinZhang} the range of exponents to include $\frac{3\sqrt{37}}{4}-3\leq q \leq 3$,
and Namlyeyeva and Skal\'ak then extended the lower bound on this range further in \cite{Namlyeyeva}, although still not to the endpoint case $q=\frac{3}{2}$.
Finally, Skal\'ak extended \cites{Skalak1,Skalak2} this result to include the range $\frac{3}{2}<q\leq \frac{19}{6}$.
The extension in \cite{Skalak1} is particularly important because it includes up until the endpoint case $q=\frac{3}{2}$. The endpoint case itself remains open. In particular, it is not known whether $T_{max}<+\infty$ implies that
\begin{equation}
    \limsup_{t\to T_{max}}
    \left\|\partial_3 u(\cdot,t)
    \right\|_{L^\frac{3}{2}}
    =+\infty.
\end{equation}
Very recently, Chen, Fang and Zhang \cite{CFZ} and Giang and Khai \cite{GiangKhai} extended the range of exponents to $3\leq q\leq 6$ using different methods, which brings the range of exponents for which there is a scale critical regularity criterion in terms of one directional derivative of velocity to $\frac{3}{2}<q\leq 6$, with the endpoint case in particular remaining open.

Another recent regularity result in terms of the unidirectional derivative is a global regularity result for a certain class of initial data. Liu and Zhang proved global regularity for initial data with a small unidirectional derivative, requiring control only on the initial data, not the history of the solution \cite{LiuZhang}. This was further generalized to the anisotropic Navier--Stokes system, with dissipation in only the horizontal directions in \cite{UnidirectionalAniso}.

Another approach to component reduction regularity criteria is based on proving regularity criteria involving only one component of the velocity.
Chemin and Zhang proved \cite{CheminZhang} that if a smooth solution of the Navier--Stokes equation blows up in finite-time $T_{max}<+\infty$, then for all $4<p<6$,
\begin{equation}
    \int_0^{T_{max}}\left\|u_3(\cdot,t)
    \right\|_{\dot{H}^{\frac{1}{2}+\frac{2}{p}}}^p
    \diff t=+\infty.
\end{equation}
Chemin, Zhang, and Zhang then extended \cite{CheminZhangZhang} this result to the range $4<p<+\infty$,
and Han, Lei, Li, and Zhao extended \cite{HanLeiLiZhao} the result further to the range $2\leq p<+\infty$.
Neustupa, Novotn\'y and Penel had proven an early regularity criterion involving only $u_3$ that was among the first component-reduction regularity criteria \cite{NeustupaOneComp}, but their result was subcritical in terms of scaling.

There are no scale-critical regularity criteria in terms of just one entry of $\nabla u$, but there are a number of results giving subcritical regularity criteria in terms of just one diagonal entry $\partial_i u_i$  or one non-diagonal entry $\partial_i u_j$
\cites{A,B,C,D,E,F,I,J,K,HanLeiLiZhao,SkalakRecent}.
The regularity criteria in terms of one diagonal entry of $\nabla u$ are closer to being scale-critical than the regularity criteria in terms of just one non-diagonal entry.

There are also component reduction regularity criteria in terms of the eigenvalues of the strain matrix. Let $\lambda_1(x,t)\leq \lambda_2(x,t) \leq \lambda_3(x,t)$ be the eigenvalues of $S(x,t)$
and let $\lambda_2^+=\max\left(0,\lambda_2\right)$.
Neustupa and Penel proved \cites{NeuPen1,NeuPen2} that if $T_{max}<+\infty$, then for all $\frac{2}{p}+\frac{3}{q}=2, 
\frac{3}{2}<q\leq +\infty$,
\begin{equation}
    \int_0^{T_{max}}
    \left\|\lambda_2^+(\cdot,t)\right\|_{L^q}^p
    \diff t=+\infty.
\end{equation}
This was later proven independently by the author in \cite{MillerStrain} using somewhat different methods involving the evolution equation for the strain.
This result was also generalized to a localized regularity criterion in \cites{NeuPen3}.
As a corollary the author proved that the strain must blow up in all directions \cite{MillerStrain}. In particular, the author proved that for any unit vector valued function, 
$v\in L^\infty\left(
[0,T_{max}]\times\mathbb{R}^3\right), |v(x,t)|=1$, almost everywhere,
if $T_{max}<+\infty$, then for all 
$\frac{2}{p}+\frac{3}{q}=2, 
\frac{3}{2}<q\leq +\infty$,
\begin{equation}
    \int_0^{T_{max}}
    \left\|(Sv)(\cdot,t)\right\|_{L^q}^p
    \diff t=+\infty.
\end{equation}
Interestingly, the special case of this corollary where $v=e_3$ is actually equivalent to the regularity criterion proven by Chae and Choe on two vorticity components, as we will now show.

\begin{proposition} \label{Equiv}
For all $1<q<+\infty$, and for all $u\in W^{1,q}, \nabla\cdot u=0$,
\begin{align}
    \|\nabla u_3\|_{L^q}+\|\partial_3 u\|_{L^q}
    &\leq C_q \|2Se_3\|_{L^q} \\
    \|\nabla u_3\|_{L^q}+\|\partial_3 u\|_{L^q}
    &\leq C_q \|e_3\times \omega\|_{L^q},
\end{align}
and furthermore
\begin{equation}
    \frac{1}{C_q}\|e_3\times \omega\|_{L^q}
    \leq \|2Se_3\|_{L^q} \leq C_q
    \|e_3\times \omega\|_{L^q}.
\end{equation}
\end{proposition}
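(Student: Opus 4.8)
The plan is to pass to the Fourier side, reconstruct $\hat{u}_3$ from the prescribed data using incompressibility, and then invoke the $L^q$ boundedness of second-order Riesz transforms (Calder\'on--Zygmund theory) for $1<q<\infty$; the resulting operator identities hold for every admissible $u$ by the boundedness of the operators involved together with a routine approximation. First I would record the relevant entries. Writing $n=2Se_3$ and $m=e_3\times\omega$, one has $n_1=\partial_1 u_3+\partial_3 u_1$, $n_2=\partial_2 u_3+\partial_3 u_2$, $n_3=2\partial_3 u_3$, while $m_1=\partial_1 u_3-\partial_3 u_1$, $m_2=\partial_2 u_3-\partial_3 u_2$ and $m_3=0$. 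Thus $n$ sees only the \emph{sum}, and $m$ only the \emph{difference}, of the two off-diagonal derivatives $\partial_j u_3$ and $\partial_3 u_j$, so disentangling them is exactly the point at which $\nabla\cdot u=0$ must be used.

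Next, using $i\xi\cdot\hat{u}(\xi)=0$, hence $\xi_1\hat{u}_1+\xi_2\hat{u}_2=-\xi_3\hat{u}_3$, a short computation gives
\begin{equation}
    \xi_1\hat{n}_1+\xi_2\hat{n}_2+\xi_3\hat{n}_3=i|\xi|^2\hat{u}_3,
    \qquad
    \xi_1\hat{m}_1+\xi_2\hat{m}_2=i|\xi|^2\hat{u}_3 .
\end{equation}
Consequently, for $k=1,2,3$,
\begin{equation}
    \widehat{\partial_k u_3}(\xi)=i\xi_k\hat{u}_3(\xi)
    =\sum_{j}\frac{\xi_k\xi_j}{|\xi|^2}\,\hat{n}_j(\xi)
    =\sum_{j}\frac{\xi_k\xi_j}{|\xi|^2}\,\hat{m}_j(\xi),
\end{equation}
where in the last sum $j$ runs over $1,2$ only. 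Each multiplier $\xi\mapsto\xi_k\xi_j/|\xi|^2$ is homogeneous of degree zero and smooth away from the origin --- up to a sign it is the composition $R_kR_j$ of Riesz transforms --- so the Mikhlin multiplier theorem yields $\|\partial_k u_3\|_{L^q}\le C_q\|n\|_{L^q}$ and $\|\partial_k u_3\|_{L^q}\le C_q\|m\|_{L^q}$ for all $1<q<\infty$. This controls $\|\nabla u_3\|_{L^q}$ and the $\partial_3 u_3$ entry of $\partial_3 u$; the remaining entries of $\partial_3 u$ are then purely algebraic, since $\partial_3 u_1=n_1-\partial_1 u_3$ and $\partial_3 u_1=\partial_1 u_3-m_1$, likewise for $\partial_3 u_2$, and $\partial_3 u_3=\tfrac12 n_3$. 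The triangle inequality then gives $\|\partial_3 u\|_{L^q}\le C_q\|n\|_{L^q}$ and $\|\partial_3 u\|_{L^q}\le C_q\|m\|_{L^q}$, and adding the two bounds yields the first two displayed inequalities of the proposition.

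The last inequality is then formal: every entry of $n$ and of $m$ is a linear combination of entries of $\nabla u_3$ and $\partial_3 u$, so $\|n\|_{L^q}+\|m\|_{L^q}\le C\big(\|\nabla u_3\|_{L^q}+\|\partial_3 u\|_{L^q}\big)$ trivially; combining this with the two inequalities just proven gives $\|e_3\times\omega\|_{L^q}\le C_q\|2Se_3\|_{L^q}$ and $\|2Se_3\|_{L^q}\le C_q\|e_3\times\omega\|_{L^q}$.

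The only substantive ingredient is the Calder\'on--Zygmund/Riesz-transform bound, which is precisely why $q=1$ and $q=\infty$ are excluded; everything else is the linear algebra linking $S$, $\omega$ and $\nabla\cdot u$. One point worth a brief remark is the underlying domain: on $\mathbb{R}^3$ the computation above is immediate, while on $\mathbb{T}^3$ it remains valid because $\nabla u_3$, $2Se_3$ and $e_3\times\omega$ all have vanishing mean, so the frequency $\xi=0$ --- the only place the multipliers fail to be defined --- never contributes.
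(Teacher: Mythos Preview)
Your argument is correct and rests on the same underlying fact as the paper's --- the $L^q$ boundedness of Riesz transforms --- but the paper packages it more structurally. Rather than computing components and Fourier multipliers, the paper simply notes the vector identities $2Se_3=\nabla u_3+\partial_3 u$ and $e_3\times\omega=\nabla u_3-\partial_3 u$, and then observes that this is already a Helmholtz decomposition: $\nabla u_3$ is a gradient and $\partial_3 u$ is divergence-free (since $\nabla\cdot u=0$), so $\nabla u_3=P_{gr}(2Se_3)=P_{gr}(e_3\times\omega)$ and $\partial_3 u=P_{df}(2Se_3)=-P_{df}(e_3\times\omega)$. The boundedness of the Leray/Helmholtz projectors on $L^q$ for $1<q<\infty$ then gives both estimates in one stroke, and the final equivalence follows by the triangle inequality exactly as you do. Your Fourier computation $\widehat{\partial_k u_3}=\sum_j(\xi_k\xi_j/|\xi|^2)\hat n_j$ is of course precisely the symbol of $P_{gr}$ applied to $n$, so the two arguments are the same at the operator level; the Helmholtz framing just avoids the coordinate unpacking and makes the role of incompressibility transparent.
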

\begin{proof}
Begin by observing that
\begin{align}
    2S e_3 &=\nabla u_3+\partial_3 u \\
    e_3\times \omega&= \nabla u_3-\partial_3 u.
\end{align}
Observing that $\nabla \cdot \partial_3 u=0$ and that $\nabla u_3$ is a gradient, we can apply the Helmholtz decomposition and see that
\begin{equation}
    \nabla u_3=P_{gr}(2Se_3)
    =P_{gr}(e_3\times\omega)
\end{equation}
and that
\begin{equation}
    \partial_3 u=P_{df}(2Se_3)
    =-P_{df}(e_3\times\omega).
\end{equation}
Using the boundedness of the Helmholtz decomposition from $L^q$ to $L^q$,
we find that for all $1<q<+\infty$,
\begin{align}
    \|\nabla u_3\|_{L^q}+\|\partial_3 u\|_{L^q}
    &\leq C_q \|2Se_3\|_{L^q} \\
    \|\nabla u_3\|_{L^q}+\|\partial_3 u\|_{L^q}
    &\leq C_q \|e_3\times \omega\|_{L^q}.
\end{align}
Applying the triangle inequality this also implies that for all $1<q<+\infty$,
\begin{align}
    \|e_3\times \omega\|_{L^q}
    &\leq
    \|\nabla u_3\|_{L^q}+\|\partial_3 u\|_{L^q}\\
    &\leq C_q \|2Se_3\|_{L^q},
\end{align}
and
\begin{align}
    \|2Se_3\|_{L^q}
    &\leq
    \|\nabla u_3\|_{L^q}+\|\partial_3 u\|_{L^q}\\
    &\leq C_q \|e_3\times \omega\|_{L^q},
\end{align}
and so this completes the proof.
\end{proof}

\begin{remark}
This result implies that Chae and Choe's regularity criterion on $e_3\times \omega$ is equivalent to the regularity criterion on $Se_3$, which is a special case of the result that the strain must blowup in all directions. It also implies that the regularity criterion in terms of two vorticity components is strictly weaker than the regularity criteria for $u_3 \in L^p_t \dot{H}^{\frac{1}{2}+\frac{2}{p}}$ for $4\leq p<+\infty$, using the Sobolev embedding
\begin{equation}
    \|u_3\|_{\dot{H}^{\frac{1}{2}+\frac{2}{p}}}
    \leq C_q \|\nabla u_3\|_{L^q},
\end{equation}
where $\frac{2}{p}+\frac{3}{q}=2$.
Finally, Proposition \ref{Equiv} implies that the regularity criterion for 
$e_3\times \omega\in L^p_t L^q_x$ is strictly weaker than the regularity criterion on $\partial_3 u\in L^p_tL^q_x$ for the range of exponents $\frac{3}{2}<q\leq 6$.
\end{remark}

The regularity criterion for $Sv \in L^p_t L^q_x$ holds for generic unit vector allowed to vary in space, so it is natural to ask if the vector can also be allowed to vary in the regularity criterion for $v\times \omega \in L^p_tL^q_x$. Of course, by rotational symmetry any fixed unit vector $v$ can replace $e_3$, but can the vector also be allowed to vary in space? The author provided a positive answer to this question for the case $q=2, p=4$ in \cite{MillerAnisoVort}, showing that if $T_{max}<+\infty$, then for all 
$v\in L^\infty\left(
[0,T_{max}]\times\mathbb{R}^3\right)$, such that $|v(x,t)|=1$ almost everywhere, 
$\nabla v \in L^\infty\left(
[0,T_{max}]\times \mathbb{R}^3\right)$,
\begin{equation} \label{LocalAnisoVort}
    \int_0^{T_{max}}\|(v\times\omega)(\cdot,t)
    \|_{L^2}^4 \diff t=+\infty.
\end{equation}

In addition to these component-reduction-type regularity criteria, an approach to regularity criteria for the Navier--Stokes equation with an even more explicitly geometric flavour involves the direction of vorticity.
Constantin and Fefferman proved that the the vorticity direction must vary rapidly in regions of large vorticity in order for blowup to occur \cite{ConstantinFefferman}.
In particular they showed that if a solution of the Navier--Stokes equation blows up in finite-time $T_{max}<+\infty$, then for all $R>0$
\begin{equation}
    \sup_{\substack{
    |\omega(x,t)|,|\omega(y,t)|>R \\
    x\neq y}} 
    \frac{|\eta(x,t)\times\eta(y,t)|}{|x-y|}
    =+\infty,
\end{equation}
where $\eta=\frac{\omega}{|\omega|}$.
This was generalized by Beir\~ao da Veiga and Berselli in \cite{daVeigaBerselli}.
As a corollary of their generalized result, they also proved a regularity criterion in terms of the gradient of the vorticity direction, showing that if $T_{max}<+\infty$, for all $\frac{2}{p}+\frac{3}{q}=\frac{1}{2},
6\leq q \leq +\infty$,
\begin{equation}
    \int_0^{T_{max}}
    \|\nabla \eta(\cdot,t)\|_{L^q}^p 
    \diff t
    =+\infty.
\end{equation}

The regularity criterion involving control on 
$v\times \omega \in L^4_tL^2_x$, where the unit vector $v$ may vary in time and space can be seen as interpolating between Chae and Choe's regularity criterion on two vorticity components and Beir\~ao da Veiga and Berselli's regularity criterion on the gradient of the vorticity direction, although the interpolation is suboptimal in terms of scaling at the later endpoint. The case $v=e_3$---or any fixed unit vector---corresponds to Chae and Choe's regularity criterion, while the case $v=\eta$ corresponds to a version for a version of Beir\~ao da Veiga and Berselli's regularity criterion that is weaker in terms of scaling.
For a more detailed discussion, see \cite{MillerAnisoVort}.

Skal\'ak recently weakened the assumptions on the regularity criterion involving $v\times\omega$, both expanding the range of exponents to $\frac{3}{2}<q<+\infty$ rather than just $q=2$ and relaxing the regularity assumption on the unit vector $v$. In this paper \cite{SkalakVort}, the interpolation between Chae and Choe's regularity criterion on two vorticity components and Beir\~ao da Veiga and Berselli's regularity criterion on the gradient of vorticity direction is optimal.

\section{Physical interpretations}

The Navier--Stokes equation has globally smooth solutions in two dimensions, so the component-reduction regularity criteria for the Navier--Stokes equation can be seen as perturbative conditions requiring regularity if a solution of the three dimensional Navier--Stokes equation is close enough to being two dimensional in some sense. If we have control on $\partial_3 u$ or $u_3$ in a scale critical space, then the solution is close enough to being two dimensional that it must be globally regular.
These component reduction regularity criteria show that blowup has to be fully three dimensional globally.
For two dimensional flows in the $xy$ plane, the vorticity is entirely in the $z$ direction, so control on $e_3\times \omega$, the vorticity in the $xy$ plane, can likewise be seen as saying that if the solution is close enough to being two dimensional in a scale-critical space, then it must be smooth. We should note that there is nothing special about the $z$ direction, and the rotational invariance of the Navier--Stokes equation means this direction can be replaced with any fixed direction and each of the above results still hold.

The regularity criterion involving $\lambda_2^+$ also has a clear physical interpretation. Like the other component reduction regularity criteria, it requires that blowup must be fully three dimensional, but it is stronger in that it requires blowup to be locally three dimensional, and gives a specific geometric structure---planar stretching and axial compression. The most natural example of this structure is two colliding jets, which leads the axial compression in the direction of the jets, and planar stretching in the plane perpendicular to the jets.

The regularity criteria involving the direction of the vorticity are also extremely significant physically, because the rapid change in the orientation of vortices has been understood, at least at a heuristic level, to be a fundamental feature of turbulence for at least half a millennium, going back to Leonardo da Vinci's ``Studies in Turbulent Flow'' (see Figure 1) \cite{NatureDaVinci}. This phenomenological feature of turbulence was given an analytical expression in the theory of Navier--Stokes regularity criteria by Constantin and Fefferman \cite{ConstantinFefferman} and later extended in \cite{daVeigaBerselli}.
This feature of turbulence is also expressed in the Kolmogorov-Obhukov theory of turbulent energy cascade, which not only gives a scaling law for the transfer of energy to shorter length scales (and equivalently higher order Fourier modes), but also suggests that turbulence must be anisotropic at the smallest length scales in the inertial range \cites{Kolmogorov,Obukhov}. This means in particular that the vorticity does not have a preferred direction at the smallest length scales.

\begin{center}
\begin{figure*}[ht]
    \centering
    \includegraphics[width=0.90\textwidth]{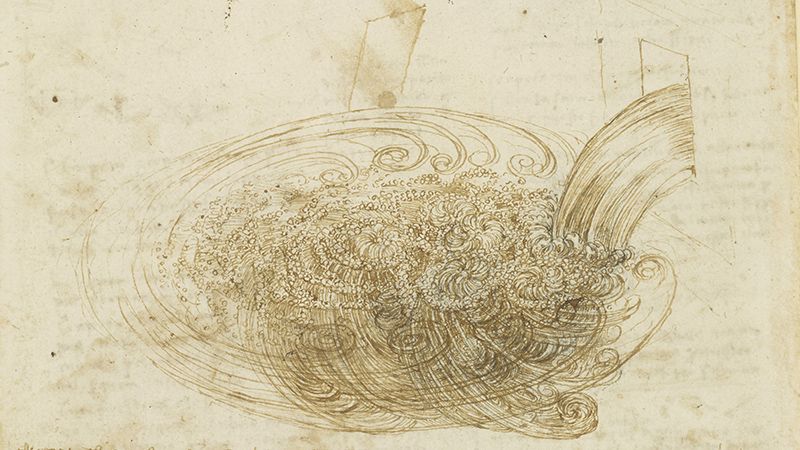}
    \caption{Leonardo da Vinci, \textit{Studies of Turbulent Water}}
\end{figure*}
\end{center}

The regularity criterion for $v\times \omega\in L^4_tL^2_x$, where $v$ is a unit vector with a bounded gradient also reflects this aspect of turbulent flow. Whereas the regularity criterion on the vorticity restricted to a fixed plane requires that the vorticity must become unbounded in every fixed plane in order for blowup to occur, and that blowup for the vorticity in this sense must be globally three dimensional, the regularity criterion on $v\times \omega \in L^4_t L^2_x$ requires that the vorticity must become unbounded restricted to any plane that may vary in space so long as the gradient of the unit vector orthogonal to the plane remains bounded. This requires the structure of the vorticity to be locally three dimensional. The understanding of turbulent flow going back to da Vinci involves both a rapid change in vorticity direction (the orientation of vortices in physical terms) and the use of all available degrees of freedom, with the vorticity pointing in every which way in the turbulent region. The locally anisotropic regularity criterion in terms of vorticity proven by the author in \cite{MillerAnisoVort} is consistent with this phenomenological picture of turbulence.

While the geometric constraints discussed here give a description of the features of possible blowup solutions, if they do in fact exist,
on their own they are not enough to guarantee global regularity. There are model equations involving the relevant constraint spaces that respect these geometric constraints on blowup, and nonetheless exhibit finite-time blowup. For instance, the author proved the existence of finite-time blowup for a model equation for the self-amplification of strain,
\begin{equation}
    \partial_t S -\nu\Delta S
    +\frac{2}{3}P_{st}\left(S^2\right)=0,
\end{equation}
that respects of number of these constraints
including on $\lambda_2^+ \in L^p_t L^q_x$ 
and $e_3 \times \omega \in L^p_t L^q_x$
\cites{MillerStrainModel}.
It is very unlikely, therefore, that simple technical improvements or extensions of these geometric regularity criteria will be enough to guarantee global regularity. The question of global regularity (or, at this point perhaps equally likely, finite-time blowup) is likely only to be resolved through a significantly improved understanding of the possible mechanisms for the depletion of nonlinearity.

\section*{Acknowledgements}
This publication was supported in part by the Fields Institute for Research in the Mathematical Sciences while the author was in residence during the Fall 2020 semester. Its contents are solely the responsibility of the author and do not necessarily represent the official views of the Fields Institute.
This material is based upon work supported by the National Science Foundation under Grant No. DMS-1440140 while the author participated in a program that was hosted by the Mathematical Sciences Research Institute in Berkeley, California, during the Spring 2021 semester.
The author would like to thank Rapha\"el Danchin, Reinhard Farwig, Sarka Necasova, Ji\v{r}\'{i} Neustupa, and the staff of the Centre International de Rencontres Mathématiques (CIRM) for all of the work they did to make the conference Vorticity, Rotation and Symmetry (V) – Global Results and Nonlocal Phenomena a success in spite of the virtual format and the limitations due to the pandemic.

\section*{Compliance with ethical standards}
Conflict of interest: the author declares that he has no conflict of interest.
\bibliographystyle{plain}
\bibliography{Bib}

\end{document}